\documentclass[10 pt,a4paper,twoside,reqno]{amsart}
\usepackage{amsfonts,amssymb,amscd,amsmath,enumerate,verbatim,calc}

\textwidth = 12.5 cm
\textheight = 20 cm
\topmargin = 0.5 cm
\oddsidemargin = 1 cm
\evensidemargin = 1 cm
\pagestyle{plain}
\usepackage{multicol}

\def \N {{\mathbb N}}

\def \al {{\alpha}}

\def \al {{\alpha}}

\topmargin = -.5in
\oddsidemargin = -.25in
\evensidemargin = -.25in
\renewcommand {\textwidth}{500pt}
\renewcommand {\textheight}{650pt}
\newtheorem{theorem}{Theorem}

\newtheorem{lemma}[theorem]{Lemma}

\newtheorem{pro}[theorem]{Proposition}
\newtheorem*{orp}{Proposition 0}

\title{Number of non-primes in the set of units  modulo $n$}
\author {Abhijit A J, A. Satyanarayana Reddy}
\date{}
\begin{document}
\maketitle
\begin{abstract}
 In this work, we studied various properties of arithmetic function $\tilde{\varphi}$, where 
 $\tilde{\varphi}(n)=|\{m\in \N | 1\le m\le n, (m,n)=1, \mbox{$m$ is not a prime}\}|.$
\end{abstract}
{\bf{Key Words}}: Greatest common divisor, Arithmetic function, Euler-totient function. \\
{\bf{AMS(2010)}}: 11A05,11A25.

\section{Introduction}
For a fixed positive integer $n$, let $U_n = \{ k : 1 \le k \le n,
\gcd(k,n) = 1\}$. If $|S|$ denotes the cardinality of the set
$S$, then $|U_n| = \varphi(n)$, the well known {\em Euler-totient
function}. Let $E_n=\{m\in \N | 1\le m\le n, (m,n)=1, \mbox{$m$ is not a prime}\}.$ 
That is $E_n=\{m\in U_n| \mbox{$m$ is not a prime}\}.$ It is known that   $\varphi(n)=n-1$ if and only if $n$ is prime, hence  $E_n=\{m\in U_n| \varphi(m)\ne m-1\}.$
 Let $\tilde{\varphi}(n)=|E_n|.$ It is clear that $\tilde{\varphi}$ is an arithmetic function.  First twenty values of $\tilde{\varphi}$ are given in the following tables.\\
  \begin{tabular}{ll}
  \begin{tabular}{|l|l|l|}
 \hline
  $n$ & $E_n$  & $\tilde{\varphi}(n)$\\
  \hline
  1& $\{1\}$ & 1\\
  \hline
  2& $\{1\}$ & 1\\
  \hline
  3& $\{1\}$ & 1\\
  \hline
  4& $\{1\}$ & 1\\
  \hline
  5& $\{1,4\}$ & 2\\
   \hline
   6& $\{1\}$ & 1\\
  \hline
 7& $\{1,4,6\}$ & 3\\
  \hline
  8& $\{1\}$ & 1\\
  \hline
 9& $\{1,4,8\}$ & 3\\
  \hline
  10& $\{1,9\}$ & 2\\
  \hline
  \end{tabular} &
\begin{tabular}{|l|l|l|}
 \hline
  $n$ & $E_n$  & $\tilde{\varphi}(n)$\\
  \hline
  11& $\{1,4,6,8,9,10\}$ & 6\\
  \hline
  12& $\{1\}$ & 1\\
  \hline
  13& $\{1,4,6,8,9,10,12\}$ & 7\\
  \hline
  14& $\{1,9\}$ & 2\\
  \hline
  15& $\{1,4,8,14\}$ & 4\\
   \hline
   16& $\{1,9,15\}$ & 3\\
  \hline
 17& $\{1, 4, 6, 8, 9, 10, 12, 14, 15, 16\}$ & 10\\
  \hline
  18& $\{1\}$ & 1\\
  \hline
 19& $\{1, 4, 6, 8, 9, 10, 12, 14, 15, 16, 18\}$ & 11\\
  \hline
  20& $\{1,9\}$ & 2\\
  \hline
  \end{tabular}
\end{tabular}\\

The following results provides few immediate properties of $\tilde{\varphi}$ function. 
Recall that  for $n\in \N$, $\pi(n), \omega(n)$ denotes the number of primes less than or equal to $n$,
the number of distinct primes divisors of $n$ respectively. We denote $N_j=p_1p_2\cdots p_j$, product of first $j$ primes, for example 
$N_3=30.$ It would be interesting to note that $\frac{\sqrt{n}}{2}<\varphi(n)<n-\sqrt{n}$ and by prime number theorem, for a large enough $n$, $\pi(n)\approx\frac{n}{log(n)}$. But, $\omega(n)$ does not have any ordinary behaviour. In fact for any natural number $k$, one can find a subsequence $\{n_i\}$ of natural numbers such that $\omega(n_i)=k$ for all $i$. To know more properties of  $\pi(n), \omega(n), \varphi(n)$ and 
$U_n$ refer any one of~\cite{apostol,burton,H:W,J:J}.

\begin{orp}\label{orp:immediate}
If $p_k$ denotes the $k^{th}$ prime, then $\tilde{\varphi}(p_k)=p_k-k.$  
 In general $\tilde{\varphi}(n)=\varphi(n)-\pi(n)+\omega(n).$
\end{orp}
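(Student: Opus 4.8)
The plan is to prove the general identity $\tilde{\varphi}(n) = \varphi(n) - \pi(n) + \omega(n)$ first by a complementary-counting argument, and then to recover the formula $\tilde{\varphi}(p_k) = p_k - k$ as an immediate special case. The whole argument is elementary; the only place that rewards a little care is the bookkeeping that converts ``primes coprime to $n$'' into the quantity $\pi(n) - \omega(n)$.

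First I would use the reformulation already noted in the excerpt, namely $E_n = \{m \in U_n : m \text{ is not a prime}\}$. This says that $E_n$ is obtained from $U_n$ by deleting exactly those elements of $U_n$ that happen to be prime. Since $|U_n| = \varphi(n)$, complementary counting gives
$$\tilde{\varphi}(n) = \varphi(n) - |\{p \in U_n : p \text{ is prime}\}|,$$
so the entire task reduces to counting the primes that lie in $U_n$.

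The key step is to identify this set of primes explicitly. A prime $p$ belongs to $U_n$ precisely when $1 \le p \le n$ and $\gcd(p,n) = 1$; since $p$ is prime, $\gcd(p,n)$ is either $1$ or $p$, so the condition $\gcd(p,n) = 1$ is equivalent to $p \nmid n$. Hence the primes in $U_n$ are exactly the primes $p \le n$ that do not divide $n$. There are $\pi(n)$ primes in total not exceeding $n$, and among them the ones that divide $n$ are precisely the distinct prime divisors of $n$ --- here one checks that every prime divisor of $n$ is automatically $\le n$ and is therefore counted by $\pi(n)$. Thus exactly $\omega(n)$ of the $\pi(n)$ primes are excluded, giving $|\{p \in U_n : p \text{ is prime}\}| = \pi(n) - \omega(n)$, and substituting yields $\tilde{\varphi}(n) = \varphi(n) - \pi(n) + \omega(n)$.

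Finally, for $n = p_k$ I would simply evaluate the three quantities: $\varphi(p_k) = p_k - 1$ because $p_k$ is prime, $\pi(p_k) = k$ since $p_k$ is by definition the $k$-th prime, and $\omega(p_k) = 1$. Substituting gives $\tilde{\varphi}(p_k) = (p_k - 1) - k + 1 = p_k - k$. There is no genuine obstacle in this proof; the single subtlety is ensuring the count in the key step is correct --- in particular that $1$, which is never prime and so always remains in $E_n$, is not mistakenly removed, and that every prime divisor of $n$ really falls in the range tallied by $\pi(n)$.
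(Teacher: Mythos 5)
Your proposal is correct and is in substance the same argument as the paper's: both hinge on the observation that the primes at most $n$ split into the $\omega(n)$ prime divisors of $n$ and the primes lying in $U_n$, which is precisely the content of the paper's disjoint-union identity $E_n \cup \{p_1,\ldots,p_{\pi(n)}\} = U_n \cup \{q_1,\ldots,q_k\}$ giving $\tilde{\varphi}(n)+\pi(n)=\varphi(n)+\omega(n)$. The only cosmetic differences are that you package this as complementary counting (subtracting the $\pi(n)-\omega(n)$ primes in $U_n$ from $\varphi(n)$) rather than equating two disjoint unions, and that you deduce $\tilde{\varphi}(p_k)=p_k-k$ as a corollary of the general formula, whereas the paper proves that special case separately via $E_{p_k}=U_{p_k}\setminus\{p_1,\ldots,p_{k-1}\}$.
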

\begin{proof}
  First part follows  from the observation that $E_{p_k}=U_{p_k}\setminus \{p_1,p_2,\ldots,p_{k-1}\}.$\\
 Let $n=q_1^{\al_1}q_2^{\al_2}\cdots q_k^{\al_k}.$ We claim $U_n\cup \{q_1,q_2,\ldots,q_k\}=E_n\cup \{p_1,p_2,\ldots,p_{\pi(n)}\}$
 Suppose $x\in U_n\cup \{q_1,q_2,\ldots,q_k\}.$ Now if $x$ is prime, then 
 then $x\in \{p_1,p_2,\ldots,p_{\pi(n)}\}$ and if $x$ is not prime, then $x\in E_n.$ Thus $x\in E_n\cup \{p_1,p_2,\ldots,p_{\pi(n)}\}.$
 
 Suppose $y\in E_n\cup \{p_1,p_2,\ldots,p_{\pi(n)}\}.$ If $y\in E_n$, then $y\in U_n$ if $y\in  \{p_1,p_2,\ldots,p_{\pi(n)}\}$ then $y\in U_n$ or 
 $y\in \{q_1,q_2,\ldots,q_k\}$ depending on $y\nmid n$ or $y|n$ respectively. 
 By definition, $E_n\cap \{p_1,p_2,\ldots,p_{\pi(n)}\}=U_n\cap \{q_1,q_2,\ldots,q_k\}=\emptyset.$ Hence
 $\tilde{\varphi}(n)+\pi(n)=|E_n|+|\{p_1,p_2,\ldots,p_{\pi(n)}\}|=|U_n|+|\{q_1,q_2,\ldots,q_k\}|=\varphi(n)+\omega(n).$
 \end{proof}

\begin{pro}\label{pro:immediate}
\begin{enumerate}
 \item \label{pro:immediate:1} If $n_0$ is the  square free part of $n$ then $\tilde{\varphi}(n)\ge \tilde{\varphi}(n_0).$
 \item\label{pro:immediate:2}  Let $n,k\in \N$ and $n\ge 3.$ Then  $\tilde{\varphi}(n^k)< \tilde{\varphi}(n^{k+1}).$ 
 \item \label{pro:immediate:3} Let $n,p,q\in \N$ where $p,q$ primes with $p<q$ and $(p,n)=1$. Then $\tilde{\varphi}(np)\le \tilde{\varphi}(nq).$
  \item \label{pro:immediate:4}    If $a\in \N$ and $\omega(a)=i$, then $\tilde{\varphi}(N_i)\le \tilde{\varphi}(a).$
\end{enumerate}
\end{pro}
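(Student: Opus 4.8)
The engine for all four parts is the identity $\tilde{\varphi}(n)=\varphi(n)-\pi(n)+\omega(n)$ from Proposition~0, together with the fact that coprimality to $n$ depends only on the set of prime divisors of $n$. For part~(1), note that $n$ and its squarefree part $n_0$ share their prime divisors, so $\gcd(m,n)=1\iff\gcd(m,n_0)=1$ for every $m$; since $n_0\mid n$ gives $n_0\le n$, every $m\in E_{n_0}$ satisfies $1\le m\le n_0\le n$, is coprime to $n$, and is not prime, hence $m\in E_n$. Thus $E_{n_0}\subseteq E_n$ and $\tilde{\varphi}(n_0)\le\tilde{\varphi}(n)$. The same inclusion argument gives the weak inequality in part~(2), since $n^k$ and $n^{k+1}$ also share their prime divisors and $n^k\le n^{k+1}$, whence $E_{n^k}\subseteq E_{n^{k+1}}$.

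To upgrade part~(2) to a strict inequality I would exhibit one element of $E_{n^{k+1}}\setminus E_{n^k}$. Let $q$ be the smallest prime not dividing $n$; for $n\ge 3$ one checks $q\le n$, because the product of all primes $\le n$ exceeds $n$, so some prime $\le n$ must miss $n$. Consecutive powers of $q$ have ratio $q\le n$, while $(n^k,n^{k+1}]$ spans a multiplicative factor $n$, so some power $q^t$ lands in $(n^k,n^{k+1}]$; since $q\le n\le n^k$ forces $t\ge 2$, this $q^t$ is composite, coprime to $n$, and lies in the interval, hence in $E_{n^{k+1}}\setminus E_{n^k}$.

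Part~(3) is the crux. Writing the difference via Proposition~0,
\[
\tilde{\varphi}(nq)-\tilde{\varphi}(np)=\big(\varphi(nq)-\varphi(np)\big)+\big(\omega(nq)-\omega(np)\big)-\big(\pi(nq)-\pi(np)\big),
\]
the main obstacle is controlling the prime-counting term, and I would isolate it as a lemma: every prime in $(np,nq]$ exceeds $n$ and is therefore coprime to $n$, while the number of integers coprime to $n$ in an interval whose length $n(q-p)$ is a multiple of $n$ is exactly $(q-p)\varphi(n)$; hence $\pi(nq)-\pi(np)\le(q-p)\varphi(n)$. It then remains to split on whether $q\mid n$. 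If $q\nmid n$, the first two bracketed terms equal $(q-p)\varphi(n)$ and $0$, so the difference is $(q-p)\varphi(n)-\big(\pi(nq)-\pi(np)\big)\ge 0$. If $q\mid n$, they equal $\varphi(n)(q-p+1)$ and $-1$, and since $\varphi(n)(q-p+1)-1-(q-p)\varphi(n)=\varphi(n)-1\ge 0$, the inequality again holds.

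Finally, part~(4) is a reduction to part~(3) via part~(1). Given $a=q_1^{\al_1}\cdots q_i^{\al_i}$ with $q_1<\cdots<q_i$, part~(1) yields $\tilde{\varphi}(a)\ge\tilde{\varphi}(q_1\cdots q_i)$, so I may assume $a$ squarefree. Since $q_1,\dots,q_j$ are $j$ distinct primes $\le q_j$, we get $q_j\ge p_j$ for each $j$. I would then replace factors one at a time, $q_j\mapsto p_j$ for $j=1,\dots,i$, each time applying part~(3) with $n'$ the product of the already-processed $p_1,\dots,p_{j-1}$ and the untouched $q_{j+1},\dots,q_i$; the hypothesis $(p_j,n')=1$ holds because $p_j$ differs from $p_1,\dots,p_{j-1}$ and is strictly smaller than $q_{j+1},\dots,q_i$. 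No step increases $\tilde{\varphi}$, so $\tilde{\varphi}(N_i)\le\tilde{\varphi}(q_1\cdots q_i)\le\tilde{\varphi}(a)$.
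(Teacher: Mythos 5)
Your proof is correct, but it diverges from the paper's in three of the four parts, most significantly in parts (3) and (4). Part (1) is the same argument ($E_{n_0}\subseteq E_n$). In part (2) both proofs exhibit a witness in $E_{n^{k+1}}\setminus E_{n^k}$, but the paper's witness is a power of $n-1$: taking $\ell\ge 2$ with $(n-1)^{\ell-1}\le n^k<(n-1)^{\ell}$ gives $n^k<(n-1)^{\ell}<n^{k+1}$, and $(n-1)^{\ell}$ is composite and coprime to $n$ for free; your witness $q^t$ (with $q$ the least prime not dividing $n$) needs the auxiliary fact that $q\le n$, which you assert via the primorial inequality --- true, but itself a Bertrand-type fact, so the paper's choice is more economical. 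In part (3) the paper never touches Proposition~0: it builds an explicit injection $f:E_{np}\to E_{nq}$ by writing $x=q^m y$ with $q\nmid y$ and setting $f(x)=p^m y$; one checks $f(x)\le x\le np<nq$, that $(f(x),nq)=1$, that $f(x)$ is not prime, and that $f$ is one-to-one. Your route --- the identity $\tilde{\varphi}=\varphi-\pi+\omega$, the bound $\pi(nq)-\pi(np)\le(q-p)\varphi(n)$ obtained by counting residues coprime to $n$ in $(np,nq]$, and the case split on whether $q\mid n$ --- is longer but valid, and it has the virtue of quantifying the slack (at least $\varphi(n)-1$ when $q\mid n$). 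In part (4) your argument is genuinely sounder than the paper's: the paper reduces to the squarefree part $s=q_1\cdots q_i$ via part (1) and then asserts $N_i\le s$ together with the inclusion $E_{N_i}\setminus\{q_1,\ldots,q_i\}\subseteq E_s\setminus\{p_1,\ldots,p_i\}$, but that inclusion is false as literally stated: for $a=s=330$ one has $121=11^2\in E_{210}=E_{N_4}$ and $121\notin\{2,3,5,11\}$, yet $\gcd(121,330)=11$, so $121\notin E_{330}$. Your one-prime-at-a-time exchange $q_j\mapsto p_j$, justified at each step by part (3) with $n'=p_1\cdots p_{j-1}q_{j+1}\cdots q_i$ (and trivially skipped when $p_j=q_j$, since part (3) requires strict inequality), avoids this defective step and gives a complete proof of the same inequality.
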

\begin{proof}
  Proof of Part~\ref{pro:immediate:1}: Since $n_0$ and $n$ have same set of prime factors, $E_{n_0}\subseteq E_n.$\\
  Proof of Part~\ref{pro:immediate:2}: It is easy to see that  $E_{n^k}\subseteq E_{n^{k+1}}.$ Sufficient to show that 
  $E_{n^{k+1}}\setminus E_{n^{k}}\ne \emptyset.$ \\ Since $n\ge 3$ 
  there exists  $\ell\in \{2,3,\ldots\}$ such that  $(n-1)^{\ell-1}\le n^k< (n-1)^{\ell}.$ Consequently $n^k<(n-1)^{\ell}<(n-1)^{\ell-1}\cdot n\le n^{k+1}.$
  Thus we produced an element $(n-1)^{\ell}$ in $E_{n^{k+1}}\setminus E_{n^{k}}.$\\  
  Proof of Part~\ref{pro:immediate:3}: Let $x\in E_{np}$  and $q^m||x$ for some $m\in \N\cup\{0\}.$ Then $x=q^m y$ for some  $y\in \N.$  Further 
that $(p^my, nq)=1$ as $(p^m,n)=1, (y,q)=1.$ Now we define a map $f:E_{np}\to E_{nq}$ as $f(x)=p^my.$ It is easy to see that 
$f$ is one-one. Hence the result follows.\\
Proof of Part~\ref{pro:immediate:4} Let $q_1,q_2,\ldots, q_i$ be all distinct  prime divisors of $a$ and $s=q_1q_2\cdots q_i.$ 
Then from the proof of Part~\ref{pro:immediate:1} we have $\tilde{\varphi}(s)\le \tilde{\varphi}(a).$ 
Hence the result follows from the facts that  $N_i\le s$ and  $E_{N_i}\setminus\{q_1,q_2,\ldots, q_i\}\subseteq E_s\setminus\{p_1,p_2,\ldots,p_i\}.$
\end{proof}
\section{Main results}
It is easy to see that $\tilde{\varphi}(n)=1$  if and only if  $n\in \{1, 2, 3, 4, 6, 8, 12, 18, 24, 30\}.$ That is $\tilde{\varphi}(\ell)>1$ whenever 
$\ell \ge 31.$ In general we prove.

\begin{theorem}\label{thm:finiteness}
 For every $n\in \N$, there exists $N\in \N$ such that  $\tilde{\varphi}(m)>n$, for all $m> N.$ 
\end{theorem}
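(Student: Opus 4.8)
The plan is to bound $\tilde{\varphi}(m)$ from below by an explicit family of composite units and then show that this lower bound already tends to infinity. The simplest composite units to exhibit are squares of primes: if $p$ is a prime with $p^2\le m$ and $p\nmid m$, then $p^2$ is composite (hence not prime), lies in $[1,m]$, and is coprime to $m$, so $p^2\in E_m$. Distinct primes produce distinct squares, so $\tilde{\varphi}(m)=|E_m|$ is at least the number of such primes. Now there are $\pi(\sqrt m)$ primes $p$ with $p^2\le m$, and at most $\omega(m)$ of them can divide $m$; discarding those gives
\[
\tilde{\varphi}(m)\ \ge\ \pi(\sqrt m)-\omega(m).
\]

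The remaining task is to check that the right-hand side diverges, and this is where the two terms must be compared on the right scales. First I would bound $\omega(m)$ crudely: since $m$ is a product of $\omega(m)$ distinct primes, each at least $2$, we have $m\ge 2^{\omega(m)}$, so $\omega(m)\le \log_2 m$. For the prime-counting term I would invoke a Chebyshev-type one-sided estimate $\pi(x)\ge c\,x/\log x$ (valid for all large $x$, and consistent with the prime number theorem already cited in the introduction); taking $x=\sqrt m$ yields $\pi(\sqrt m)\ge 2c\,\sqrt m/\log m$. Combining the two estimates gives
\[
\tilde{\varphi}(m)\ \ge\ \frac{2c\,\sqrt m}{\log m}-\frac{\log m}{\log 2}.
\]
Since $\sqrt m/\log m\to\infty$ as $m\to\infty$ while $\log m$ grows incomparably more slowly, the right-hand side tends to $+\infty$.

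To finish, fix $n\in\N$. Because the displayed lower bound tends to infinity, there is an $N\in\N$ beyond which it exceeds $n$; then for every $m>N$ we get $\tilde{\varphi}(m)\ge (\text{that bound})>n$, which is exactly the assertion of the theorem.

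I expect the only genuine obstacle to be the analytic comparison in the second paragraph: one must be certain that the growth of $\pi(\sqrt m)$ really dominates $\omega(m)$. This is safe because $\pi(\sqrt m)$ grows like a power of $m$ (up to a logarithmic factor) whereas $\omega(m)$ is at most logarithmic, but it does rely on having a lower Chebyshev/PNT bound for $\pi$ in hand; everything else is elementary counting. As an alternative route one could instead use Proposition~\ref{pro:immediate}: parts~\ref{pro:immediate:1} and~\ref{pro:immediate:4} reduce the problem to proving $\tilde{\varphi}(N_i)\to\infty$ along the primorials together with $\tilde{\varphi}(m)\to\infty$ on each fixed value of $\omega$, but the prime-square bound above is shorter and sidesteps this case split.
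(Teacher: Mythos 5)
Your proof is correct, and it takes a genuinely different route from the paper's. You establish the quantitative bound $\tilde{\varphi}(m)\ge \pi(\sqrt{m})-\omega(m)$ by exhibiting the prime squares $p^2\le m$ with $p\nmid m$ as elements of $E_m$, and then finish analytically: $\omega(m)\le \log_2 m$, while Chebyshev's estimate gives $\pi(\sqrt{m})\ge 2c\sqrt{m}/\log m$, so the lower bound diverges. The paper instead argues combinatorially with a case split on the number of prime factors: Lemma~\ref{lemma:finiteness1} shows $\tilde{\varphi}(N_k)\to\infty$ along primorials using Bertrand's postulate (the composite units exhibited there are products $rp_{i+1}$ with $r$ prime), Lemma~\ref{lemma:finiteness2} shows that for each fixed value of $\omega$ only finitely many $n$ can have a given value of $\tilde{\varphi}$ (the elements exhibited there are products $q_1q_2$ of two small primes coprime to $n$, close in spirit to your squares), and Proposition~\ref{pro:immediate}, Part~\ref{pro:immediate:4}, glues the two regimes together. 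What your approach buys: it is shorter, avoids both lemmas and the case split, and yields an explicit divergent lower bound, hence an effective value of $N$ for each $n$ (the paper's argument is purely qualitative). What it costs: you genuinely need the one-sided Chebyshev bound $\pi(x)\ge cx/\log x$; with your crude estimate $\omega(m)\le\log_2 m$ the subtracted term can be of order $\log m$, so a weaker elementary bound on $\pi$ (such as $\pi(x)\ge\tfrac12\log_2 x$) would not close the argument. The paper needs only Bertrand's postulate. Since both external inputs are Chebyshev-type results of comparable depth, neither proof is strictly more self-contained; yours trades the paper's structural decomposition by $\omega$ for an analytic comparison of growth rates.
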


\begin{lemma} \label{lemma:finiteness1}
 Let $n\in \N.$ There exists  an $M_n\in \N$ such that  $\tilde{\varphi}(N_k)>n$ for all $k\in \N$, $k\ge M_n.$
\end{lemma}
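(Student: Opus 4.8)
The plan is to start from Proposition 0, which gives $\tilde{\varphi}(N_k)=\varphi(N_k)-\pi(N_k)+\omega(N_k)$. Since $N_k=p_1p_2\cdots p_k$ is squarefree with exactly $k$ prime factors, $\omega(N_k)=k$, so $\tilde{\varphi}(N_k)=\varphi(N_k)-\pi(N_k)+k$. Hence it suffices to prove that $\varphi(N_k)-\pi(N_k)\to\infty$ as $k\to\infty$: once this is known, for a given $n$ there is an $M_n$ with $\varphi(N_k)-\pi(N_k)+k>n$ for all $k\ge M_n$, which is exactly the assertion. Conceptually, the statement says that the prime units modulo $N_k$, which are precisely the primes in $(p_k,N_k]$ and number $\pi(N_k)-k$, form a vanishing fraction of all $\varphi(N_k)$ units.

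The heart of the argument is therefore to show $\pi(N_k)=o(\varphi(N_k))$ together with $\varphi(N_k)\to\infty$. For the totient I would use Mertens' third theorem: $\varphi(N_k)/N_k=\prod_{i=1}^{k}(1-1/p_i)\sim e^{-\gamma}/\ln p_k$, so that $\varphi(N_k)$ has order $N_k/\ln p_k$. For the prime count I would combine the prime number theorem, $\pi(N_k)\sim N_k/\ln N_k$, with the Chebyshev-type estimate $\ln N_k=\sum_{i=1}^{k}\ln p_i=\vartheta(p_k)\sim p_k$; this gives $\pi(N_k)$ of order $N_k/p_k$. Dividing, $\pi(N_k)/\varphi(N_k)$ has order $(\ln p_k)/p_k\to 0$, so for all large $k$ one has $\pi(N_k)\le\tfrac12\varphi(N_k)$ and thus $\tilde{\varphi}(N_k)\ge\tfrac12\varphi(N_k)\to\infty$.

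The step I expect to be the main obstacle is this asymptotic comparison, specifically pinning down the orders of $\varphi(N_k)$ and $\pi(N_k)$ simultaneously; the crucial and easily mishandled point is that $\ln N_k$ grows only like $p_k$, which is what makes $\pi(N_k)$ much smaller than $\varphi(N_k)$ even though both are $o(N_k)$. If one prefers to avoid Mertens and the prime number theorem, I would instead give a direct lower bound: for every prime $q$ with $p_k<q\le\sqrt{N_k}$ the number $q^2$ is composite, at most $N_k$, and coprime to $N_k$, hence lies in $E_{N_k}$, and distinct $q$ give distinct $q^2$. This yields $\tilde{\varphi}(N_k)\ge\pi(\sqrt{N_k})-k$. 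Using $p_i\ge i+1$ one has $N_k\ge(k+1)!$, so $\sqrt{N_k}$ is enormous, and Bertrand's postulate, in the form $\pi(x)\ge\log_2 x-1$, already forces $\pi(\sqrt{N_k})-k\to\infty$. This elementary route sidesteps the analytic input entirely, at the cost of a slightly more careful count of the primes below $\sqrt{N_k}$.
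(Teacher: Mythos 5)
Your proof is correct, but it takes a genuinely different route from the paper's. The paper argues entirely elementarily: for $i\ge 4$ it considers the set $Q_i$ of primes $r$ coprime to $N_i$ with $rp_{i+1}<N_i$; each product $rp_{i+1}$ is a composite unit modulo $N_i$, so $\tilde{\varphi}(N_i)>|Q_i|$, and a doubling argument based on Bertrand's postulate (essentially $p_{j+2}<4p_j$ together with $8N_i<N_{i+1}$) shows that $|Q_i|$ is strictly increasing, whence $\tilde{\varphi}(N_i)>|Q_i|\ge i$ for $i\ge 4$. Your primary route instead passes through Proposition 0 and the analytic estimates of Mertens and Chebyshev/PNT; this is heavier machinery, but it yields a far stronger conclusion: $\tilde{\varphi}(N_k)\ge\tfrac12\varphi(N_k)$ for large $k$, i.e.\ almost all units modulo $N_k$ are non-prime, compared with the paper's bound, which is only linear in $k$. (One small labeling point: $\vartheta(p_k)\sim p_k$ is a form of the prime number theorem rather than of Chebyshev's estimates; but Chebyshev's two-sided bounds $\vartheta(x)\asymp x$ and $\pi(x)\ll x/\ln x$, together with Mertens, are all your argument actually needs, since only the order of magnitude of the ratio $\pi(N_k)/\varphi(N_k)$ enters.) Your fallback elementary argument --- placing $q^2\in E_{N_k}$ for each prime $q$ with $p_k<q\le\sqrt{N_k}$ and counting such $q$ via Bertrand's postulate and $N_k\ge(k+1)!$ --- is closest in spirit to the paper's construction, but uses prime squares where the paper uses products $rp_{i+1}$ of two distinct primes exceeding $p_i$; it also gives an asymptotically stronger bound (of order $k\log k$) while avoiding the paper's somewhat delicate bookkeeping with the nested sets $Q_i$.
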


\begin{proof} Let $p_i$ denote the $i^{th}$ prime. 
 For $i\ge 4,$ we define $P_i=\{m\in \N| \varphi(m)=m-1,m\in U_{N_i}\}$, $Q_i=\{r\in P_i|rp_{i+1}<N_i\}.$
 From Bertrand's Postulate $p_{i+1}\in Q_i.$  Suppose $Q_i=\{p_{i+1},p_{i+2},\ldots, p_h\}$ then $|Q_i|=h-i.$ Further 
 $p_{i+2}< 2p_{i+1}$ and $p_{h+2}<2p_{h+1}<4p_h.$ Thus $p_{i+2}p_{h+2}<8p_{i+1}p_h<8N_i<N_{i+1}$ as $p_{i+1}>8.$
 Hence $\{p_{i+2},\ldots,p_{h+2}\}\subseteq Q_{i+1}$ consequently $|Q_i|<|Q_{i+1}|.$ Now $\{rp_{i+1}|r\in Q_i\}\subset E_{N_i}.$
 Thus $\tilde{\varphi}(N_i)>|Q_i|$. Hence the result follows from the fact that $|Q_i|$  increases with $i.$
\end{proof}

 Since $|Q_4|=4$, therefore for all $n\geq 4$, $|Q_n|\geq n$ and  $\tilde{\varphi}(N_k)> k$ when $k\ge 4$. Hence $M_n\le \max\{4,n\}$. Further, it is easy to see that $\tilde{\varphi}$ is an increasing function on $\{N_3, N_4, N_5, \ldots\}.$

\begin{lemma}\label{lemma:finiteness2}
 Let $a,b\in \N$ and $A(a,b)=\{n\in \N | \tilde{\varphi}(n)=a, \omega(n)=b\}.$ Then $A(a,b)$ is  finite. 
\end{lemma}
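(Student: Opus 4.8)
The plan is to exploit the closed formula $\tilde{\varphi}(n) = \varphi(n) - \pi(n) + \omega(n)$ from Proposition~0. Fix $a,b\in\N$ and suppose $n\in A(a,b)$, so that $\omega(n)=b$ and consequently $\varphi(n)-\pi(n)=a-b$. The whole point is that once the number of distinct prime factors is pinned to $b$, the totient $\varphi(n)$ cannot be too small relative to $n$, whereas $\pi(n)$ grows sublinearly; the difference $\varphi(n)-\pi(n)$ is therefore forced to infinity, leaving only finitely many admissible $n$.

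First I would establish the lower bound on $\varphi$. Writing the $b$ distinct primes dividing $n$ as $q_1<\cdots<q_b$ and letting $p_i$ denote the $i^{th}$ prime, one has $q_i\ge p_i$, hence $1-1/q_i\ge 1-1/p_i$, and therefore
$$\varphi(n)=n\prod_{i=1}^{b}\left(1-\frac{1}{q_i}\right)\ge c_b\, n,\qquad c_b:=\prod_{i=1}^{b}\left(1-\frac{1}{p_i}\right)>0,$$
a positive constant depending only on $b$.

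Next I would combine this with the sublinearity of $\pi$. By the prime number theorem $\pi(n)/n\to 0$, so that $\pi(n)=o(n)$, and thus
$$\varphi(n)-\pi(n)\ge c_b\, n-\pi(n)=\bigl(c_b-o(1)\bigr)\,n\xrightarrow[n\to\infty]{}\infty.$$
Since every $n\in A(a,b)$ satisfies $\varphi(n)-\pi(n)=a-b$, a fixed number, the displayed divergence shows that the equation $\varphi(n)-\pi(n)=a-b$ can hold for only finitely many $n$; a fortiori $A(a,b)$ is finite.

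The only real obstacle is recognising that the hypothesis $\omega(n)=b$ is doing essential work: without it, $\varphi(n)/n$ can tend to $0$ (as it does along the primorials $N_k$), the bound $\varphi(n)\ge c_b\,n$ collapses, and $\varphi(n)-\pi(n)$ need no longer diverge. Everything else rests on the routine input $\pi(n)=o(n)$, which the paper already invokes.
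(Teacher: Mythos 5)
Your proof is correct, but it takes a genuinely different route from the paper's. You push everything through the identity $\tilde{\varphi}(n)=\varphi(n)-\pi(n)+\omega(n)$ of Proposition~0: fixing $\omega(n)=b$ forces $\varphi(n)\ge c_b\,n$ with $c_b=\prod_{i=1}^{b}(1-1/p_i)$ (valid since the $i$-th smallest prime divisor of $n$ is at least $p_i$), and then $\pi(n)=o(n)$ makes $\varphi(n)-\pi(n)\to\infty$, which is incompatible with $\varphi(n)-\pi(n)=a-b$ holding for infinitely many $n$. The paper instead argues combinatorially inside $E_n$: it fixes $\ell$ with $a<\ell(\ell+1)/2$, lets $p$ be the $(b+\ell)$-th prime, and observes that for any $n>p^2$ with $\omega(n)=b$ at least $\ell$ of the primes below $p$ are coprime to $n$; the pairwise products of these primes are $\ell(\ell+1)/2$ distinct non-prime elements of $U_n$, whence $\tilde{\varphi}(n)>a$. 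The trade-offs are clear: the paper's argument is entirely elementary (it needs no prime number theorem, not even a Chebyshev bound) and yields an explicit bound $A(a,b)\subseteq\{1,2,\ldots,p_{b+\ell}^2\}$; yours is shorter given Proposition~0 plus the standard analytic input $\pi(n)=o(n)$, and it proves a stronger quantitative statement, namely that $\tilde{\varphi}(n)\ge(c_b-o(1))\,n$ grows linearly along integers with $\omega(n)=b$. Both proofs isolate the same essential point you emphasize at the end: it is the boundedness of $\omega(n)$ that keeps $\varphi(n)/n$ away from zero, and without it the argument must fail, as the primorials show.
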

\begin{proof}
 Let $p$ be the $(b+\ell)^{th}$ prime, where $ a <\frac{\ell(\ell+1)}{2}.$ Let $n\in \N$ with $\omega(n)=b$ and $n> p^2.$
 Since  there are at least $\ell$ primes less than $p$,which are co-prime to $n$, we can choose two such primes $q_1, q_2.$  Then $q_1q_2<p^2< n.$ 
 Thus $q_1q_2\in E_n$ as a consequence $\tilde{\varphi}(n)>\frac{\ell(\ell+1)}{2}>a.$ Hence the result follows.
\end{proof}

Now we will prove Theorem~\ref{thm:finiteness}.
\begin{proof}
 Let $n\in \N.$ There  exists $k\in \N$ such that if $\omega(t)>k$, then $\tilde{\varphi}(t)\ge \tilde{\varphi}(N_{\omega(t)})> n.$
 Now from  Lemma~\ref{lemma:finiteness2} for each $i\in \N$, $i\le k$ there exists $m_i\in \N$ such that for all $t\in \N$ with 
 $\omega(t)=i$ and $t>m_i$, then $\tilde{\varphi}(t)>n.$ 
 Thus if  $N=\max\{m_1,m_2,\ldots,m_n\}$ and $\ell>N,$  then $\tilde{\varphi}(\ell)>n.$ 
\end{proof}

\section{Future work}
The following question is natural.\\ 
Let $k\in \N$ be fixed. Find all $n$ such that $\tilde{\varphi}(n)=k.$\\
One can see the following result by calculating the appropriate bounds and checking accordingly.
\begin{lemma}\label{s(k)}
Let $s(k)=\{n\in \N|\tilde{\varphi}(n)=k\}.$ Then  the set of values $k\in \{1,2,\ldots,100\}$ such that $s(k)$ is empty set is $\{13,31,70\}.$
\end{lemma}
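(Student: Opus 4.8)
The plan is to turn this into a finite verification by first establishing an explicit constant $N$ such that $\tilde{\varphi}(n)\le 100$ forces $n\le N$, and then tabulating $\tilde{\varphi}$ up to $N$. Throughout I would work with the closed form $\tilde{\varphi}(n)=\varphi(n)-\pi(n)+\omega(n)$ from Proposition~0, which makes each individual value cheap to evaluate and keeps the eventual table manageable.

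The first step is to bound the number of distinct prime factors. By Proposition~\ref{pro:immediate}, part~\ref{pro:immediate:4}, any $n$ with $\omega(n)=i$ satisfies $\tilde{\varphi}(n)\ge\tilde{\varphi}(N_i)$, and the remark following Lemma~\ref{lemma:finiteness1} records that $\tilde{\varphi}$ is increasing along $N_3,N_4,\dots$. A direct computation gives $\tilde{\varphi}(N_5)=\varphi(2310)-\pi(2310)+5=480-343+5=142>100$; this estimate is robust, since even a crude overestimate of $\pi(2310)$ keeps the value above $100$. Consequently every $n$ with $\tilde{\varphi}(n)\le 100$ must have $\omega(n)\le 4$.

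The second step bounds $n$ itself for each fixed $b=\omega(n)\in\{1,2,3,4\}$, using the mechanism in the proof of Lemma~\ref{lemma:finiteness2}. Taking $a=100$ and the least $\ell$ with $\ell(\ell+1)/2>100$, namely $\ell=14$ (as $14\cdot 15/2=105$), set $p=p_{b+14}$. Among the primes $p_1,\dots,p_{b+14}$, all of which are at most $p$, at most $b$ can divide $n$, so at least $14$ of them are coprime to $n$ and $\le p$; if $n>p^2$, then the $\binom{15}{2}=105$ pairwise products (with repetition) of these $14$ primes are distinct, non-prime, coprime to $n$, and $\le p^2<n$, hence lie in $E_n$, giving $\tilde{\varphi}(n)\ge 105>100$. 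Thus $\tilde{\varphi}(n)\le 100$ forces $n\le p_{b+14}^2$, and since $b\le 4$ gives $p_{b+14}\le p_{18}=61$, every candidate satisfies $n\le 61^2=3721$.

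With the provable bound $n\le 3721$ in hand, the lemma reduces to computing $\tilde{\varphi}(n)$ for $1\le n\le 3721$ and reading off which $k\in\{1,\dots,100\}$ occur: one exhibits explicit witnesses for every $k\ne 13,31,70$ and confirms that no $n\le 3721$ yields $13$, $31$, or $70$. The main obstacle here is not conceptual but one of rigor: the emptiness claims for $k=13,31,70$ are only as reliable as the bound, so the delicate point is making the passage from Lemma~\ref{lemma:finiteness2} fully explicit—the correct choice of $\ell$ and of the prime $p_{b+14}$—so that the range $n\le 3721$ is demonstrably exhaustive. Once that is secured, as anticipated by Theorem~\ref{thm:finiteness}, the remaining tabulation is a routine finite check.
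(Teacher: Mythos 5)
Your proposal is correct and is essentially the paper's own approach made explicit: the paper's entire ``proof'' is the remark that the result follows by calculating the appropriate bounds and checking accordingly, and you supply exactly those bounds, namely $\omega(n)\le 4$ (from $\tilde{\varphi}(N_5)=480-343+5=142>100$ combined with Proposition~\ref{pro:immediate} Part~\ref{pro:immediate:4} and the paper's remark that $\tilde{\varphi}$ is increasing on $\{N_3,N_4,N_5,\ldots\}$), followed by $n\le p_{\omega(n)+14}^{2}\le 61^{2}=3721$ via the counting mechanism of Lemma~\ref{lemma:finiteness2} with $\ell=14$ and $\binom{15}{2}=105>100$. Your numerical details all check out ($\varphi(2310)=480$, $\pi(2310)=343$, $p_{18}=61$), so the lemma is reduced, exactly as the paper intends, to a routine finite verification over $n\le 3721$.
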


{\bf Conjecture:} If  $M=\{\tilde{\varphi}(n)|n\in \N\},$ then $\N\setminus M$ contains infinite number of elements.\\

Also recall the Carmichael conjecture~\cite{car}: if $N(m)=|\{n\in \N|\varphi(n)=m\}|$, then $N(m)\ne 1.$ This conjecture no longer true for $\tilde{\varphi}$ as $\tilde{\varphi}(n)=16$ only for $n=144.$
If $s(k)$ is as defined in the Lemma~\ref{s(k)}, then then the values of $k\in \{1,\ldots,100\}$ such that  $s(k)$ contains a single element is $\{16, 39, 47, 49, 53, 57, 58, 65, 66, 76, 85, 91, 94\}.$

The following tables and observations may be useful in answering  the above questions.

Given a positive integer $k$, the following tables provides the smallest  value of $n$ such that 
$\tilde{\varphi}(n)=k.$ For example $\tilde{\varphi}(1)=\tilde{\varphi
}(2)=\tilde{\varphi}(3)=\tilde{\varphi}(4)=1,$ hence the smallest value $n$ such that $\tilde{\varphi}(n)=2$ is $5.$\\

\begin{tabular}{|l|l|l|l|l|l|l|l|l|l|l|l|l|l|l|l|l|}
\hline
k& 1&2&3&4&5&6&7&8&9&10&11&12&13&14&15\\
  n&1&5&7&15&26&11&13&38&102&17&19&25&??&23&35\\
 \hline
  \end{tabular}
 
 \vspace{3mm}
 
 \begin{tabular}{|l|l|l|l|l|l|l|l|l|l|l|l|l|l|l|l|l|}
 \hline
 k&16&17&18&19&20&21&22&23&24&25&26&27&28&29&30
 \\
 n&144&74&198&29&31&75&57&104&94&37&55&69&41&43&118\\
 \hline
 \end{tabular}
 
 \vspace{3mm}
 
 \begin{tabular}{|l|l|l|l|l|l|l|l|l|l|l|l|l|l|l|l|l|}
 \hline
 k&31&32&33&34&35&36&37&38&39&40&41&42&43&44&45\\
 n&??&47&81&128&87&134&53&93&480&146&77&59&61&117&111\\
    \hline
    \end{tabular}
    
    \vspace{3mm}
    
    \begin{tabular}{|l|l|l|l|l|l|l|l|l|l|l|l|l|l|l|l|l|}

  \hline
  k&46&47&48&49&50&51&52&53&54&55&56&57&58&59&60\\
  n&166&172&67&250&91&71&73&350&194&129&202&79&206&212&83\\
  \hline
 \end{tabular}\\

The following observations are useful for the question posed in the beginning  of the section for the case $k\le 20.$ 
\begin{multicols}{2}
 \begin{enumerate}
 \item $\tilde{\varphi}(n)=1\Leftrightarrow n\in \{2, 3, 4, 6, 8, 12, 18, 24, 30\}.$
 \item $\tilde{\varphi}(n)=2\Leftrightarrow n\in \{5, 10, 14, 20, 42, 60\}.$
 \item $\tilde{\varphi}(n)=3\Leftrightarrow n\in \{7, 9, 16, 36, 48, 90\}.$
 \item $\tilde{\varphi}(n)=4 \Leftrightarrow n\in \{15,22,54,84\}.$
 \item $\tilde{\varphi}(n)=5 \Leftrightarrow n\in \{26, 28, 66, 120\}.$
 \item $\tilde{\varphi}(n)=6\Leftrightarrow n\in \{11, 21, 32, 40, 72, 78, 210\}.$
 \item $\tilde{\varphi}(n)=7\Leftrightarrow n\in \{13,34,50\}.$
 \item $\tilde{\varphi}(n)=8\Leftrightarrow n\in \{38, 44, 70, 150\}.$
 \item $\tilde{\varphi}(n)=9\Leftrightarrow n\in \{102, 114, 126\}.$
 \item $\tilde{\varphi}(n)=10\Leftrightarrow n\in \{17, 27, 46, 56, 96, 108, 180\}.$
 \item $\tilde{\varphi}(n)=11\Leftrightarrow n\in \{19,33,52,132\}.$
 \item $\tilde{\varphi}(n)=12\Leftrightarrow n\in \{25,45,80,168\}.$
  \item $\tilde{\varphi}(n)=14\Leftrightarrow n\in \{23,39,58,62,110,138\}.$
   \item $\tilde{\varphi}(n)=15\Leftrightarrow n\in \{35,64,68,156,240\}.$
   \item $\tilde{\varphi}(n)=16\Leftrightarrow n=144.$
    \item $\tilde{\varphi}(n)=17\Leftrightarrow n\in \{74,76,100,140\}.$
     \item $\tilde{\varphi}(n)=18\Leftrightarrow n\in \{198,270,330\}.$
      \item $\tilde{\varphi}(n)=19\Leftrightarrow n\in \{29,51,88,98,162,174,420\}.$
       \item $\tilde{\varphi}(n)=20\Leftrightarrow n\in \{31,63,82,130\}.$
 \end{enumerate}
 \end{multicols}
 {\bf Acknowledgment:} We would  like to thank referee for valuable comments.

\begin{tabular}{ll}
Abhijit A J, & A. Satyanarayana Reddy,\\
Department of Mathematics,&Department of Mathematics,\\
Shiv Nadar University, & Shiv Nadar University,\\
UP, India-201314. & UP, India-201314.\\ 
(e-mail:aj448@snu.edu.in).& (e-mail:satyanarayana.reddy@snu.edu.in).
\end{tabular}
\end{document}